\documentclass[11pt]{article}
\usepackage{bbm}
\usepackage{mathrsfs}
\usepackage{amsfonts}
\usepackage{amssymb}
\usepackage{amsmath,theorem,float}
\usepackage{color}
\usepackage{multirow}

\textheight=214truemm
\textwidth=154truemm
\topmargin=-5truemm
\oddsidemargin=3truemm
\setlength{\evensidemargin}{\oddsidemargin}
\parskip=3pt

\newtheorem{theorem}{Theorem}[section]
\newtheorem{prop}[theorem]{Proposition}

\newtheorem{definition}[theorem]{Definition}
\newtheorem{remark}[theorem]{Remark}
\newtheorem{example}[theorem]{Example}

\setcounter{MaxMatrixCols}{15}

\def\N{{\mathbb{N}}}

\def\Y{{\mathbb{Y}}}

\def\rank{\hbox{\rm{rank}}}

\def\D{{\sigma}}

\def\ord{\hbox{\rm{ord}}}
\def\fp{{\mathfrak{p}}}
\def\trdeg{\hbox{\rm{trdeg}}}
\def\Frac{\hbox{\rm{Frac}}}

\begin{document}
\title{Difference Index of Quasi-regular Difference Algebraic Systems}
\author{Jie Wang}
\date{\today}
\maketitle

\begin{abstract}
This paper is devoted to studying difference indices of quasi-regular difference algebraic systems. We give the definition of difference indices through a family of pseudo-Jacobian matrices. Some properties of difference indices are proved. In particular, a Jacobi-type upper bound for the sum of the order and the difference index is given. As applications, an upper bound of the Hilbert-Levin regularity and an upper bound of the order for difference ideal membership problem are deduced.
\end{abstract}

\section{Introduction}
There are several definitions of differential indices of a differential algebraic system in the literature (see for instance \cite{d-ca}, \cite{d-ku}, \cite{d-le}, \cite{d-pa}, \cite{d-se}). Particularly in \cite{iqr} and \cite{ldi}, D'Alfonso, Jeronimo, Massaccesi and Solern\'o introduced the notion of $\mathfrak{P}$-differential indices for quasi-regular differential algebraic systems through a family of pseudo-Jacobian matrices. Although they are not completely equivalent, in each case they represent a measure of the implicitness of the given system. It seems that the corresponding difference index of a difference algebraic system has not been studied yet. In this paper, we first give the definition of difference indices for quasi-regular difference algebraic systems, following the method used in \cite{iqr} and \cite{ldi}.

Suppose $F=\{f_1,\ldots,f_r\}$ is a couple of difference polynomials, $\Delta$ is the difference ideal generated by $F$, and $\fp$ is a minimal reflexive prime difference ideal over $\Delta$. For an element $a$ in a difference field, denote $a^{[k]}=\{a,\D(a),\ldots,\D^k(a)\}$. Then we say the system $F$ is {\em quasi-regular} at $\fp$ if for every positive integer $i$, the Jacobian matrix of the polynomials $f_1^{[k-1]},\ldots,f_r^{[k-1]}$ with respect to the set of variables $\Y^{[k-1+e]}$ has full row rank and $\Delta$ is reflexive. Through a family of pseudo-Jacobian matrices, we can give the definition of the difference index of a quasi-regular difference algebraic system, which is called the {\em $\fp$-difference index}. As usual, its definition follows from a certain chain which eventually becomes stationary. Similarly to the case of $\mathfrak{P}$-differential indices in \cite{iqr}, the chain is established by the sequence of ranks of certain Jacobian submatrices associated with the system $F$. Assume $\omega$ is the $\fp$-difference index of the system $F$. It turns out that for every $i\ge e-1$ ($e$ is the highest order of $F$), $\omega$ satisfies:
\[\Delta_{i-e+1+\omega}\cap A_i=\Delta\cap A_i,\]
where $A_i$ is the polynomial ring in the variables with orders no more than $i$, which meets our expectation for difference indices.

This approach enables us to give an upper bound for the sum of the order and the $\fp$-difference index of a quasi-regular system. Based on this, we can give several applications of $\fp$-difference indices, including an upper bound of the Hilbert-Levin regularity and an upper bound of orders for difference ideal membership problem.

The paper will be organized as follows. In Section 2, we list some basic notions from difference algebra which will be used later. In Section 3, we give the definition of quasi-regular difference algebraic systems. In Section 4, we introduce a family of pseudo-Jacobian matrices and give the definition of $\fp$-difference indices through studying the ranks of them. In Section 5, a Jacobi-type upper bound for the sum of the order and the $\fp$-difference index is given. In Section 6, several applications of $\fp$-difference indices are given. In Section 7, we give an example.

\section{Preliminaries}
A {\em difference ring} or {\em $\sigma$-ring} for short $(R,\sigma)$, is a commutative ring $R$ together with a ring endomorphism $\sigma\colon R\rightarrow R$. If $R$ is a field, then we call it a {\em difference field}, or a {\em $\sigma$-field} for short. We usually omit $\sigma$ from the notation, simply refer to $R$ as a $\sigma$-ring or a $\sigma$-field. In this paper, $K$ is always assumed to be a $\D$-field of characteristic $0$.

\begin{definition}
Let $R$ be a $\D$-ring. An ideal $I$ of $R$ is called a {\em $\D$-ideal} if for $a\in R$, $a\in I$ implies $\D(a)\in I$. Suppose $I$ is a $\D$-ideal of $R$, then $I$ is called
\begin{itemize}
  \item {\em reflexive} if $\D(a)\in I$ implies $a\in I$ for $a\in R$;
  \item {\em $\D$-prime} if $I$ is reflexive and a prime ideal as an algebraic ideal.
\end{itemize}
\end{definition}

For a subset $F$ in a $\D$-ring, we denote $[F]$ the $\D$-ideal generated by $F$. Let $K$ be a $\sigma$-field. Suppose $\Y=\{y_1,\ldots,y_n\}$ is a set of $\sigma$-indeterminates over $K$. Then the {\em $\sigma$-polynomial ring} over $K$ in $\Y$ is the polynomial ring in the variables $\Y,\sigma(\Y),\sigma^2(\Y),\ldots$. It is denoted by
\[K\{\Y\}=K\{y_1,\ldots,y_n\}\]
and has a natural $K$-$\sigma$-algebra structure.

For more details about difference algebra, please refer to \cite{wibmer}. 

\section{Quasi-regular difference algebraic systems}
Let $K$ be a $\D$-field. Let $a$ be an element in a $\D$-extension field of $K$, $S$ a set of elements in a $\D$-extension field of $K$, and $i\in\N$. Denote $a^{(i)}=\D^i(a), a^{[i]}=\{a,a^{(1)},\ldots,a^{(i)}\}$, $S^{(i)} = \cup_{a\in S} \{a^{(i)}\}$ and $S^{[i]} = \cup_{a\in S} a^{[i]}$. For the $\sigma$-indeterminates $\Y=\{y_1,\ldots,y_n\}$ and $i\in\N$, we will treat the elements of $\Y^{[i]}$ as algebraic indeterminates, and $K[\Y^{[i]}]$ is the polynomial ring in $\Y^{[i]}$.

Throughout the paper let $F=\{f_1,\ldots,f_r\}\subset K\{\Y\}$ be a system of difference polynomials over $K$ and $\fp\subseteq K\{\Y\}$ a $\D$-prime ideal minimal over $[F]$. Let $\epsilon_{ij}:=\ord_{y_j}(f_i)$ which is the order of $f_i$ with respect to $y_j$ and denote $e:=\max\{\epsilon_{ij}\}$ for the maximal difference order which occurs in $F$. We assume that $F$ actually involves difference operator, i.e. $e\ge1$.
We introduce also the following auxiliary polynomial rings and ideals: for every $k\in\N$, we denote $A_k$ the polynomial ring $A_k:=K[\Y^{[k]}]$ and $\Delta_k:=(f_1^{[k-1]},\ldots,f_r^{[k-1]})\subseteq A_{k-1+e}$. We set $\Delta_0:=(0)$ by definition.

For each non-negative integer $k$, we write $B_k$ for the local ring obtained from $A_k$ after localization at the prime ideal $A_k\cap\fp$ and we denote $\fp_k:=A_{k-1+e}\cap\fp$. Since each $A_k$ is a polynomial ring, the localizations $B_k$ are regular rings. For the sake of simplicity, we preserve the notation $\Delta_k$ for the ideal generated by $f_1^{[k-1]},\ldots,f_r^{[k-1]}$ in the local ring $B_{k-1+e}$ and denote $\Delta$ the $\D$-ideal generated by $F$ in $K\{\Y\}_{\fp}$.

\begin{definition}
We say the system $F$ is {\em quasi-regular} at $\fp$ if for every positive integer $k$, the Jacobian matrix of the polynomials $f_1^{[k-1]},\ldots,f_r^{[k-1]}$ with respect to the set of variables $\Y^{[k-1+e]}$ has full row rank over the domain $A_{k-1+e}/\fp_k$ and $\Delta$ is reflexive.
\end{definition}

This condition can be easily rephrased in terms of K\"ahler differentials saying that the differentials $\{\mathrm{d} f_i^{(k)}, 1\le i\le r, k\in\N\}\subset\Omega_{K\{\Y\}/K}$ are a $K\{\Y\}/\fp$-linearly independent set in $\Omega_{K\{\Y\}/K}\otimes_{K\{\Y\}} K\{\Y\}/\fp$.

\begin{remark}\label{pd-re}
If the $\D$-ideal $[F]\subseteq K\{\Y\}$ is already a $\D$-prime ideal, the minimality of $\fp$ implies that $\fp=[F]$ and all our results remain true considering the rings $A_k$ and the $\D$-ideal $[F]$ without localization. In this case if $F$ is quasi-regular at $[F]$ we will say simply that $F$ is quasi-regular.
\end{remark}

In this paper, we always assume that $F$ is a difference algebraic system which is quasi-regular at $\fp$.

\begin{prop}\label{pd-prop}
Let $F$ be a difference algebraic system which is quasi-regular at $\fp$. For $k\in\N^*$, we have:
\begin{enumerate}
  \item $f_1^{[k-1]},\ldots,f_r^{[k-1]}$ is a regular sequence in the local ring $B_{k-1+e}$ and generates a prime ideal.
  \item In the localized ring $K\{\Y\}_{\fp}$, $\Delta$ agrees with $\fp K\{\Y\}_{\fp}$.
  \item If $\kappa$ denotes the residue class field of $\fp$, the difference transcendence degree of $\kappa$ over $K$ is $n-r$.
\end{enumerate}
\end{prop}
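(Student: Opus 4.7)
For (1), note that $B_{k-1+e}$ is regular local (being a localization of a polynomial ring over $K$) with residue field $\kappa(\fp_k):=\Frac(A_{k-1+e}/\fp_k)$. The quasi-regularity hypothesis says exactly that the K\"ahler differentials $df_i^{(j)}$, for $1\le i\le r$ and $0\le j\le k-1$, are $\kappa(\fp_k)$-linearly independent in $\Omega_{B_{k-1+e}/K}\otimes_{B_{k-1+e}}\kappa(\fp_k)$. Since $B_{k-1+e}$ is essentially of finite type over the characteristic-zero field $K$, the standard Jacobian criterion for regularity (e.g.\ Matsumura, \S30) gives both the regular-sequence property of $f_1^{[k-1]},\ldots,f_r^{[k-1]}$ in $B_{k-1+e}$ and the regularity, hence integrality, of the quotient $B_{k-1+e}/\Delta_k$, so $\Delta_k$ is prime.

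For (2), I first upgrade (1) to primality of each extended ideal $\Delta_k K\{\Y\}_{\fp}$: the map $B_{k-1+e}\to K\{\Y\}_{\fp}$ factors as a polynomial extension $B_{k-1+e}\hookrightarrow B_{k-1+e}[y_i^{(j)}:j\ge k+e]$ followed by a localization at the image of $\fp$, and both operations preserve primality, so $\Delta_k K\{\Y\}_{\fp}$ is prime. The $\D$-ideal $\Delta$ is the ascending union of this chain of primes, hence itself prime. Quasi-regularity also gives that $\Delta$ is reflexive (and $\D$ extends to $K\{\Y\}_{\fp}$ because $\fp$ is reflexive), so $\Delta$ is a $\D$-prime ideal of $K\{\Y\}_{\fp}$. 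Its contraction $\Delta\cap K\{\Y\}$ is then a $\D$-prime ideal of $K\{\Y\}$ lying between $[F]$ and $\fp$; by minimality of $\fp$ over $[F]$ it equals $\fp$, and re-extending gives $\Delta=\fp K\{\Y\}_{\fp}$.

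For (3), set $\omega(k):=\trdeg_K\Frac(A_k/\fp\cap A_k)=n(k+1)-\mathrm{ht}(\fp\cap A_k)$; the $\D$-transcendence degree of $\kappa/K$ is the leading coefficient of this eventually polynomial function of $k$ (see \cite{wibmer}). From (1), $\Delta_{k-e+1}$ is prime of height $r(k-e+1)$ and sits inside $(\fp\cap A_k)B_k$, so $\mathrm{ht}(\fp\cap A_k)\ge r(k-e+1)$, whence $\omega(k)\le (n-r)(k+1)+re$ and the leading coefficient is at most $n-r$, giving $\D\text{-}\trdeg_K\kappa\le n-r$. The matching lower bound is the difference-algebraic analog of Krull's height theorem (\cite{wibmer}): any $\D$-prime minimal over a $\D$-ideal generated by $r$ $\D$-polynomials in $n$ $\D$-indeterminates has $\D$-transcendence degree at least $n-r$ over $K$. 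Combining the two bounds yields the equality.

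The main obstacle is item (2): one has to track carefully that the extension of $\Delta_k$ through polynomial adjunction and subsequent localization remains prime, and that reflexivity, contraction and minimality combine as claimed to produce $\Delta=\fp K\{\Y\}_{\fp}$. Items (1) and (3) are then essentially direct applications of the Jacobian criterion and of difference dimension theory, respectively.
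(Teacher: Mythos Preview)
Your argument is correct and is precisely the kind of detailed reconstruction that the paper's own proof---which consists only of the sentence ``The proof is similar to \cite[Proposition~3]{iqr}''---implicitly invokes: the Jacobian criterion in a regular local ring for (1), preservation of primality under polynomial adjunction and localization together with minimality of $\fp$ for (2), and height/dimension counting plus the difference Krull-type inequality for (3). There is no meaningful methodological difference to report; you have simply written out what the paper leaves to the reader via the differential analogue.
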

\begin{proof}
The proof is similar to \cite[Proposition 3]{iqr}.
\end{proof}


\section{The definition of $\fp$-difference index}
Following \cite{iqr}, we introduce a family of pseudo-Jacobian matrices which we need in order to define the concept of difference index. For a matrix $E$ over $K$, we use $E^{(i)}$ to denote the matrix whose elements are the $i$-th transform of the corresponding elements of $E$.
\begin{definition}
For each $k\in\N$ and $i\in\N_{\ge e-1}$ (i.e. $i\in\N$ and $i\ge e-1$), we define the $kr\times kn$-matrix $J_{k,i}$ as follows:
\begin{align*}
J_{k,i}:&=\frac{\partial{(F^{(i-e+1)},F^{(i-e+2)},\ldots,F^{(i-e+k)}})}{\partial{(\Y^{(i+1)},\Y^{(i+2)},\ldots,\Y^{(i+k)})}}\\
&=\begin{pmatrix}
\frac{\partial F^{(i-e+1)}}{\partial \Y^{(i+1)}}&0&\cdots&0\\ \frac{\partial F^{(i-e+2)}}{\partial \Y^{(i+1)}}&\frac{\partial F^{(i-e+2)}}{\partial \Y^{(i+2)}}&\cdots&0\\ \vdots&\vdots&\ddots&\vdots\\ \frac{\partial F^{(i-e+k)}}{\partial \Y^{(i+1)}}&\frac{\partial F^{(i-e+k)}}{\partial \Y^{(i+2)}}&\cdots&\frac{\partial F^{(i-e+k)}}{\partial \Y^{(i+k)}}
\end{pmatrix},
\end{align*}
where each $\frac{\partial F^{(p)}}{\partial \Y^{(q)}}$ denotes the Jacobian matrix $(\partial(f_1^{(p)},\ldots,f_r^{(p)})/\partial(y_1^{(q)},\ldots,y_n^{(q)}))_{r\times n}$.
\end{definition}

Since the partial derivative operator and the difference operator are commutative, we have
\[J_{k,i}=\begin{pmatrix}
(\frac{\partial F}{\partial \Y^{(e)}})^{(i-e+1)}&0&\cdots&0\\ (\frac{\partial F}{\partial \Y^{(e-1)}})^{(i-e+2)}&(\frac{\partial F}{\partial \Y^{(e)}})^{(i-e+2)}&\cdots&0\\ \vdots&\vdots&\ddots&\vdots\\ (\frac{\partial F}{\partial \Y^{(e-k+1)}})^{(i-e+k)}&(\frac{\partial F}{\partial \Y^{(e-k+2)}})^{(i-e+k)}&\cdots&(\frac{\partial F}{\partial \Y^{(e)}})^{(i-e+k)}
\end{pmatrix}.\]

Note that $J_{k,i+1}=J_{k,i}^{(1)}$ and
\begin{align}
J_{k+1,i}&=\begin{pmatrix}
\text{\Large $J_{k,i}$}&\begin{matrix}0\\ \vdots\\ 0\end{matrix}\\ \begin{matrix}(\frac{\partial F}{\partial \Y^{(e-k)}})^{(i-e+k+1)}&\cdots&(\frac{\partial F}{\partial \Y^{(e-1)}})^{(i-e+k+1)}\end{matrix}&(\frac{\partial F}{\partial \Y^{(e)}})^{(i-e+k+1)}
\end{pmatrix}\label{di-equ2} \\
&=\begin{pmatrix}
(\frac{\partial F}{\partial \Y^{(e)}})^{(i-e+1)}&\begin{matrix}0&\cdots&0\end{matrix}\\
\begin{matrix}(\frac{\partial F}{\partial \Y^{(e-1)}})^{(i-e+2)}\\ \vdots\\ (\frac{\partial F}{\partial \Y^{(e-k)}})^{(i-e+k+1)}\end{matrix}&\text{\Large $J_{k,i}^{(1)}$}
\end{pmatrix}.\label{di-equ3}
\end{align}

\begin{definition}
For $k\in\N$ and $i\in\N_{\ge e-1}$, we define $\mu_{k,i}\in\N$ as follows:
\begin{itemize}
  \item $\mu_{0,i}:=0$;
  \item $\mu_{k,i}:=\dim_{\kappa}\ker(J_{k,i}^{\tau})$, for $k\ge1$, where $J_{k,i}^{\tau}$ denotes the usual transpose of the matrix $J_{k,i}$ and $\kappa$ denotes the residue class field of $\fp$. In particular $\mu_{k,i}=kr-\rank_{\kappa}(J_{k,i})$.
\end{itemize}
\end{definition}

\begin{prop}
Let $k\in\N$ and $i\in\N_{\ge e-1}$. Then $\mu_{k,i}=\mu_{k,i+1}$.
\end{prop}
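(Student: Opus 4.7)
The plan is to exploit the identity $J_{k,i+1}=J_{k,i}^{(1)}$ recorded just after the definition, together with the fact that the difference operator $\sigma$ descends to an \emph{injective} endomorphism of the residue field $\kappa$. The case $k=0$ is trivial by definition, so assume $k\ge 1$.

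First I would check that $\sigma$ acts on $\kappa$. Since $\fp$ is $\sigma$-prime, it is reflexive, so the quotient map identifies $\sigma$ with a well-defined ring endomorphism of $K\{\Y\}/\fp$, and reflexivity ($\sigma(a)\in\fp\Rightarrow a\in\fp$) says precisely that this endomorphism is injective. It extends uniquely to an injective field endomorphism of $\kappa=\Frac(K\{\Y\}/\fp)$, which I continue to denote by $\sigma$. Since $\sigma$ commutes with the quotient map $K\{\Y\}\to K\{\Y\}/\fp\hookrightarrow\kappa$, reducing the entries of $J_{k,i}^{(1)}$ modulo $\fp$ yields the same matrix over $\kappa$ as first reducing $J_{k,i}$ modulo $\fp$ and then applying $\sigma$ entrywise.

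Second, I would observe the elementary fact that any injective ring endomorphism $\sigma$ of a field $\kappa$ preserves the rank of matrices. Indeed, for $M\in\kappa^{m\times n}$, $\rank_{\kappa}(M)$ equals the largest size of a non-vanishing minor of $M$, and for any square submatrix $N$ one has $\det(\sigma(N))=\sigma(\det N)$; by injectivity $\det N=0\iff\det\sigma(N)=0$, hence the two matrices have the same rank.

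Combining these, $\rank_{\kappa}(J_{k,i+1})=\rank_{\kappa}(J_{k,i}^{(1)})=\rank_{\kappa}(\sigma(J_{k,i}))=\rank_{\kappa}(J_{k,i})$, and therefore
\[
\mu_{k,i+1}=kr-\rank_{\kappa}(J_{k,i+1})=kr-\rank_{\kappa}(J_{k,i})=\mu_{k,i},
\]
as desired. The only mildly delicate point is justifying that reflexivity of $\fp$ yields injectivity of $\sigma$ on $\kappa$; once that is in place everything else is routine.
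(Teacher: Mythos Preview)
Your proof is correct and follows essentially the same approach as the paper: both use $J_{k,i+1}=J_{k,i}^{(1)}$ and conclude rank equality by arguing that corresponding minors of $J_{k,i}$ and $J_{k,i}^{(1)}$ vanish or not simultaneously. Your version is more explicit in isolating the key point the paper leaves tacit, namely that reflexivity of $\fp$ makes $\sigma$ injective on $\kappa$, which is exactly what is needed to guarantee $\det N=0\iff \sigma(\det N)=0$.
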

\begin{proof}
Since $J_{k,i+1}=J_{k,i}^{(1)}$ for any $k\in\N$ and any $i\in\N_{\ge e-1}$, we just need to show that $J_{k,i}^{(1)}$ and $J_{k,i}$ have the same rank. This is obvious since maximal nonzero minors of $J_{k,i}^{(1)}$ and $J_{k,i}$ have the same order.
\end{proof}

The previous proposition shows that the sequence $\mu_{k,i}$ does not depend on the index $i$. Therefore, in the sequel, we will write $\mu_k$ instead of $\mu_{k,i}$, for any $i\in\N_{\ge e-1}$.

Denote $\kappa(\Delta_k)$ the residue class field of $\Delta_k$ in the ring $B_{k-1+e}$, $\kappa(\fp_k)$ the residue class field of $\fp_k$ in the ring $A_{k-1+e}$ and $\kappa$ the residue class field of $\fp$. As an additional hypothesis on the system $F$, we assume that the rank of the matrix $J_{k,i}$ over $\kappa(\Delta_{i-e+1+k+s})$ does not depend on $s$, where $s\in\N$. That is to say, we assume the rank of the matrix $J_{k,i}$ considered alternatively over $\kappa(\Delta_{i-e+1+k})$, or over $\kappa(\fp_{i-e+1+k})$, or over $\kappa$ is always the same.
\begin{prop}\label{di-prop1}
Let $k\in\N$ and $i\in\N_{\ge e-1}$. Then:
\begin{enumerate}
  \item The transcendence degree of the field extension
  $$\Frac(B_{i}/(\Delta_{i-e+1+k}\cap B_{i}))\hookrightarrow \Frac(B_{i+k}/\Delta_{i-e+1+k})$$
  is $k(n-r)+\mu_{k}$.
  \item The following identity holds:
  $$\trdeg_K(\Frac(B_{i}/(\Delta_{i-e+1+k}\cap B_{i})))=(n-r)(i+1)+er-\mu_{k}.$$
\end{enumerate}
\end{prop}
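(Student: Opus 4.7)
My plan is to prove Part 1 via a Kähler differential computation combined with the Jacobian criterion on a suitable intermediate regular local ring, and then deduce Part 2 using Proposition~\ref{pd-prop} together with the tower identity $\trdeg_K M=\trdeg_K L+\trdeg_L M$, which is valid in characteristic zero since $M/L$ is separably generated. Throughout, write $L:=\Frac(B_i/(\Delta_{i-e+1+k}\cap B_i))$ and $M:=\Frac(B_{i+k}/\Delta_{i-e+1+k})$; both are well-defined fields by Proposition~\ref{pd-prop}(1).

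For Part 1, I would compute $\dim_M \Omega_{M/L}$, which equals $\trdeg_L M$. The module $\Omega_{M/L}$ is generated by the $kn$ differentials $d\bar y_j^{(i+p)}$ ($1\le j\le n$, $1\le p\le k$), since everything in $L$ has trivial differential; the only nontrivial relations among them come from $dF^{(i-e+1)}=\cdots=dF^{(i-e+k)}=0$, which, after killing the components involving variables of order $\le i$, assemble exactly into the matrix $J_{k,i}$. To upgrade the resulting bound $\dim_M\Omega_{M/L}\le kn-\rank_M J_{k,i}$ into an equality, set $R_0:=B_i/\Delta_{i-e+1}$ and $D_0:=B_{i+k}/\Delta_{i-e+1}B_{i+k}$. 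By Proposition~\ref{pd-prop}(1) together with flat base change, $R_0$ is a regular local ring (its defining relations being independent modulo $\fp_{i-e+1}^2$ by the full-row-rank clause of quasi-regularity), hence $D_0\cong R_0[\Y^{(i+1)},\ldots,\Y^{(i+k)}]_{\mathrm{loc}}$ is regular as well. The image $I$ of $(F^{(i-e+1)},\ldots,F^{(i-e+k)})$ in $D_0$ is prime because $D_0/I=B_{i+k}/\Delta_{i-e+1+k}$ is a domain, so the Jacobian criterion at the generic point $M$ of $D_0/I$ yields $\mathrm{ht}(I)=\rank_M J_{k,i}$, and the additional rank-invariance hypothesis identifies this with $\rank_\kappa J_{k,i}=kr-\mu_k$. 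Hence $\trdeg_L M=kn-(kr-\mu_k)=k(n-r)+\mu_k$.

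For Part 2, the analogous computation applied on $B_{i+k}$ itself gives $\trdeg_K M$ directly. By Proposition~\ref{pd-prop}(1), the regular sequence $f_1^{[i-e+k]},\ldots,f_r^{[i-e+k]}$ of length $(i-e+k+1)r$ generates the prime $\Delta_{i-e+1+k}$ in the regular local ring $B_{i+k}$, so $\mathrm{ht}(\Delta_{i-e+1+k})=(i-e+k+1)r$; since $M$ is the fraction field of a quotient of the polynomial ring $A_{i+k}=K[\Y^{[i+k]}]$, the standard dimension formula gives $\trdeg_K M=(i+k+1)n-(i-e+k+1)r=(n-r)(i+k+1)+er$. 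Substituting this and Part 1 into the tower identity,
\[\trdeg_K L=\trdeg_K M-\trdeg_L M=\bigl[(n-r)(i+k+1)+er\bigr]-\bigl[k(n-r)+\mu_k\bigr]=(n-r)(i+1)+er-\mu_k,\]
which is Part 2.

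The main obstacle is the intermediate regularity step in Part 1: showing that $R_0$ (hence $D_0$) is regular, and that the height of $I\subset D_0$ equals the generic rank of its Jacobian. Regularity of $R_0$ does not follow directly from the regular-sequence statement of Proposition~\ref{pd-prop}; it also requires that the generators of $\Delta_{i-e+1}$ are linearly independent in the cotangent space $\fp_{i-e+1}/\fp_{i-e+1}^2$, which is exactly what the full-row-rank clause of quasi-regularity at $\fp_{i-e+1}$ encodes. Once regularity is in hand, the additional rank-invariance hypothesis is precisely what is needed to translate the local height/Jacobian rank at the generic point into the $\kappa$-defined invariant $\mu_k$ appearing in the final formula.
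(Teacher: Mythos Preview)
Your overall strategy---establishing $\trdeg_K M=(n-r)(i+k+1)+er$ from the regular--sequence statement of Proposition~\ref{pd-prop} and then using the tower law---is sound, and that computation is correct. The gap is in your Part~1 argument. In $D_0=B_{i+k}/\Delta_{i-e+1}$ the ideal $I=(\overline{F^{(i-e+1)}},\ldots,\overline{F^{(i-e+k)}})$ is generated by the last $kr$ members of the regular sequence $F^{[i-e+k]}$ in the regular local ring $B_{i+k}$, so $\mathrm{ht}(I)=kr$, not $\rank_M J_{k,i}=kr-\mu_k$. The Jacobian criterion in $D_0$ measures the rank of the differentials of the generators against a regular system of parameters of $D_0$, and those parameters necessarily include (images of) low-order variables $y_j^{(p)}$ with $p\le i$; the corresponding block of partials---precisely the ``$*$'' block sitting below $\mathcal J_0$ and to the left of $J_{k,i}$ in the full Jacobian---raises the rank from $\rank J_{k,i}$ back up to $kr$. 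Even granting your height formula, the leap to $\trdeg_L M=kn-\mathrm{ht}(I)$ is unjustified: $D_0$ is not a polynomial ring of relative dimension $kn$ over $L$, and $L$ is the fraction field of $R_0$ modulo the \emph{a priori} unknown contraction $I\cap R_0$, not of $R_0$ itself.

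The argument the paper defers to (\cite[Proposition~6]{iqr}) instead exploits the block structure of the \emph{full} Jacobian $\mathcal J=\partial F^{[i-e+k]}/\partial\Y^{[i+k]}=\begin{pmatrix}\mathcal J_0&0\\ *&J_{k,i}\end{pmatrix}$ together with its full row rank from quasi-regularity. Working in $\Omega_{M/K}\cong M^{(i+k+1)n}/\mathrm{rowspan}_M(\mathcal J)$, the image of $\Omega_{L/K}\otimes_L M$ is the span of the low-order classes $\{d\bar y_j^{(p)}:p\le i\}$, and one checks that a row combination $(v_0,v_1)\mathcal J$ lies in this low-order block iff $v_1\in\ker J_{k,i}^\tau$; the full-row-rank hypothesis then makes $(v_0,v_1)\mapsto(v_0,v_1)\mathcal J$ injective, so the relations among the low-order differentials form a space of dimension $(i-e+1)r+\mu_k$. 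This gives $\trdeg_K L=(i+1)n-(i-e+1)r-\mu_k=(n-r)(i+1)+er-\mu_k$ directly (Part~2), and Part~1 follows from your tower identity. If you prefer to argue Part~1 first, the missing ingredient is to show that in $L[\Y^{(i+1)},\ldots,\Y^{(i+k)}]$ the prime $\mathfrak q$ with residue field $M$ is generated, after localizing at $\mathfrak q$, by the images of $F^{(i-e+1)},\ldots,F^{(i-e+k)}$; only then does the conormal sequence identify $\dim_M\Omega_{M/L}$ with $kn-\rank_M J_{k,i}$.
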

\begin{proof}
The proof is similar to \cite[Proposition 6]{iqr}.
\end{proof}

\begin{prop}\label{di-prop2}
The sequence $(\mu_{k})_{k\in\N}$ is non-decreasing and verifies the inequality
\begin{equation}\label{di-equ1}
\sum_{j=1}^r\min\{k,e-e_j\}\le\mu_{k}\le\min\{k,e\}r.
\end{equation}
In particular, there exists $k\in\N$, $0\le k\le e+\sum_{j=1}^r e_j$, such that $\mu_{k}=\mu_{k+1}$.
\end{prop}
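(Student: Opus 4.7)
The plan is to prove the four assertions (monotonicity, upper bound, lower bound, stationarity) in turn, reading off the needed structural information from the block decompositions \eqref{di-equ2} and \eqref{di-equ3}. Monotonicity follows at once from \eqref{di-equ2}: the matrix $J_{k+1,i}$ is obtained from $J_{k,i}$ by appending $r$ rows and $n$ columns with the upper-right $kr\times n$ block equal to zero, so $\rank(J_{k+1,i})\le\rank(J_{k,i})+r$, and consequently $\mu_{k+1}\ge\mu_k$.

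For the upper bound $\mu_k\le\min\{k,e\}r$, the case $k\le e$ is immediate from the row-count of $J_{k,i}$. For $k>e$ I would fix $i=e-1$ (permissible because the preceding proposition shows that $\rank J_{k,i}$ does not depend on $i$) and compare $J_{k,e-1}$ against the quasi-regular hypothesis. An index check shows that for $p\ge e+1$ the polynomial $f_j^{(p-1)}$ involves only variables of order in $\{p-1,\ldots,p-1+e_j\}\subseteq\{e,\ldots,k-1+e\}$, which is exactly the column range of $J_{k,e-1}$. Hence the bottom $(k-e)r$ rows of $J_{k,e-1}$ agree with the corresponding rows of the full Jacobian $\partial F^{[k-1]}/\partial\Y^{[k-1+e]}$, padded with zeros in the dropped columns of orders $0,\ldots,e-1$. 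Quasi-regularity forces this full Jacobian to have row rank $kr$, so any $(k-e)r$ of its rows are linearly independent; therefore $\rank(J_{k,e-1})\ge(k-e)r$ and $\mu_k\le er$. Carefully verifying this row-identification is the main technical point.

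For the lower bound, I would count identically zero rows of $J_{k,i}$. The row indexed by $f_j^{(i-e+p)}$ consists of partial derivatives with respect to $\Y^{(i+1)},\ldots,\Y^{(i+k)}$, while the variables actually occurring in $f_j^{(i-e+p)}$ have orders in $\{i-e+p,\ldots,i-e+p+e_j\}$. These two sets of orders are disjoint precisely when $p\le e-e_j$, giving $\min\{k,e-e_j\}$ identically zero rows per $j$ and therefore $\mu_k\ge\sum_{j=1}^r\min\{k,e-e_j\}$. Finally, for the stationarity, when $k\ge e$ both bounds force $\mu_k$ into the integer interval $\bigl[re-\sum_j e_j,\,re\bigr]$, which contains only $\sum_j e_j+1$ values. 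Since $(\mu_k)_{k\ge e}$ is non-decreasing, the $\sum_j e_j+2$ consecutive terms $\mu_e,\mu_{e+1},\ldots,\mu_{e+\sum_j e_j+1}$ cannot all be distinct, so $\mu_k=\mu_{k+1}$ for some $k\in\{e,\ldots,e+\sum_j e_j\}\subseteq[0,\,e+\sum_j e_j]$, as required.
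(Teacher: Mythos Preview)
Your proof is correct. Three of the four assertions---monotonicity, the lower bound, and the pigeonhole stationarity argument---are argued essentially as in the paper. The one genuine difference is in the upper bound $\mu_k\le er$ for $k>e$. The paper invokes Proposition~\ref{di-prop1}, which gives
\[
\trdeg_K\bigl(\Frac(B_i/(\Delta_{i-e+1+k}\cap B_i))\bigr)=(n-r)(i+1)+er-\mu_k,
\]
and then bounds this transcendence degree below by $\trdeg_K\bigl(\Frac(B_i/(\Delta\cap B_i))\bigr)\ge(n-r)(i+1)$, using that the difference dimension of $\Delta$ is $n-r$. Your argument stays purely at the level of linear algebra: you identify the bottom $(k-e)r$ rows of $J_{k,e-1}$ with rows of the full Jacobian $\partial F^{[k-1]}/\partial\Y^{[k-1+e]}$ (padded by zeros in the dropped columns), observe that quasi-regularity forces this full Jacobian to have row rank $kr$ over $\kappa$, and conclude $\rank(J_{k,e-1})\ge(k-e)r$ directly. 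This is more elementary and self-contained, bypassing the transcendence-degree machinery; the paper's route, on the other hand, links $\mu_k$ to a geometric invariant that is reused later (for instance in Theorem~\ref{mc-tm} and Proposition~\ref{ord}), so the investment in Proposition~\ref{di-prop1} pays off downstream.
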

\begin{proof}
Fix an index $i\in\N_{\ge e-1}$. From (\ref{di-equ2}), it is easy to see that $\ker(J_{k,i}^{\tau})\times\{0\}^r\subseteq\ker(J_{k+1,i}^{\tau})$ for every $k\in\N$. Then the fact $(\mu_{k})_{k\in\N}$ is a non-decreasing sequence follows immediately.

For every $k\in\N$, the matrix $J_{k,i}$ has $kr$ rows. Therefore, $\dim_{\kappa}\ker(J_{k,i}^{\tau})\le kr$. On the other hand, due to Proposition \ref{di-prop1}, we have that $\trdeg_K(\Frac(B_i/\Delta_{i-e+1+k}\cap B_i))=(n-r)(i+1)+er-\mu_{k}$. Now, $\trdeg_K(\Frac(B_i/\Delta_{i-e+1+k}\cap B_i))\ge \trdeg_K(\Frac(B_i/\Delta\cap B_i))$, since $\Delta_{i-e+1+k}\cap B_i \subseteq\Delta\cap B_i$, and so, the fact that the difference dimension of $\Delta$ is $n-r$ implies that $\trdeg_K(\Frac(B_i /\Delta\cap B_i))\ge(n-r)(i+1)$. Hence, $\mu_{k}\le er$ holds.

In order to show the other inequality, we observe that, since the order of the polynomial $f_j$ is $e_j(1\le j\le r)$, the partial derivatives $\frac{\partial{f_j}}{\partial{\Y^{(q)}}}$ are zeros for $q>e_j$. So, each polynomial $f_j$ induces $k$ null rows if $e-k+1>e_j$ or $e-e_j$ null rows if $e-k+1\le e_j$ in the matrix $J_{k,i}$. Equivalently, $f_j$ induces $\min\{k,e-e_j\}$ many null rows in the matrix $J_{k,i}$. We conclude that the matrix $J_{k,i}$ has at least $\sum^r_{j=1}\min\{k,e-e_j\}$ null rows. Thus, the dimension of the kernel of the transpose matrix $J^{\tau}_{k,i}$ (i.e. $\mu_{k}$) is at least $\sum^r_{j=1}\min\{k,e-e_j\}$. The second assertion follows directly from the fact that for every $k\ge e$, the inequality (\ref{di-equ1}) reads $\sum^r_{j=1}(e-e_j)\le\mu_{k}\le er$.
\end{proof}

\begin{theorem}\label{di-thm1}
Fix an index $i\in\N_{\ge e-1}$ and let $k_0\in\N$ be the minimum of $k$ such that $\mu_{k+1}=\mu_{k}$ (this minimum is well defined due to Proposition \ref{di-prop2}). Then $\mu_{k}=\mu_{k_0}$ for every $k\ge k_0$.
\end{theorem}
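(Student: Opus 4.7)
The plan is to prove by induction on $k \ge k_0$ that $\mu_{k+1}=\mu_k$; summing these equalities gives the desired $\mu_k=\mu_{k_0}$ for every $k\ge k_0$. The base case $k=k_0$ is the definition of $k_0$, and the heart of the argument is the inductive step. The key idea is to use Proposition \ref{di-prop1}(2) as a dictionary between the rank statement ``$\mu_k=\mu_{k+1}$'' and the ideal statement ``$\Delta_{i-e+1+k}\cap B_i=\Delta_{i-e+2+k}\cap B_i$''. Because this dictionary is available at every index $i\ge e-1$, applying it at $i+1$ in the hypothesis and at $i$ in the conclusion lets me slide the equality one step forward.

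Concretely, fix $i\ge e-1$ and assume $\mu_{k+1}=\mu_k$ for some $k\ge k_0$. Applying Proposition \ref{di-prop1}(2) at index $i+1$ with parameters $k$ and $k+1$ yields
\[\trdeg_K\Frac\bigl(B_{i+1}/(\Delta_{i-e+2+k}\cap B_{i+1})\bigr)=\trdeg_K\Frac\bigl(B_{i+1}/(\Delta_{i-e+3+k}\cap B_{i+1})\bigr),\]
since both sides equal $(n-r)(i+2)+er-\mu_k$. By Proposition \ref{pd-prop}(1) and the stability of primeness under contraction, the two ideals in question are prime ideals of $B_{i+1}$ with $\Delta_{i-e+2+k}\cap B_{i+1}\subseteq\Delta_{i-e+3+k}\cap B_{i+1}$. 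Since $B_{i+1}$ is a localization of the polynomial ring $A_{i+1}$, the Krull dimension of each quotient agrees with the transcendence degree above, so the two contracted primes must coincide. Intersecting with $B_i\subseteq B_{i+1}$ then yields
\[\Delta_{i-e+2+k}\cap B_i=\Delta_{i-e+3+k}\cap B_i,\]
and a final application of Proposition \ref{di-prop1}(2) at index $i$, this time with parameters $k+1$ and $k+2$, translates this back into $\mu_{k+1}=\mu_{k+2}$, closing the induction.

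The only delicate point I expect is the implication ``equal transcendence degree plus one inclusion of primes $\Rightarrow$ equal primes''. This rests on the standard fact that for primes in (a localization of) a polynomial ring over $K$, the Krull dimension of the quotient equals the transcendence degree of its fraction field over $K$, and that this dimension strictly drops along a strict inclusion of primes. Once this is in hand, the rest of the argument is pure index bookkeeping around Proposition \ref{di-prop1}(2), exploiting the freedom to evaluate it at two consecutive values of $i$.
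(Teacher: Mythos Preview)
Your argument is correct, and it takes a genuinely different route from the paper's. The paper proves the inductive step by pure linear algebra on the pseudo-Jacobian matrices: using the block-triangular recursions (\ref{di-equ2}) and (\ref{di-equ3}), it shows that $\mu_k=\mu_{k+1}$ is equivalent to $\ker(J_{k+1,i}^{\tau})\subseteq\{v_{k+1}=\mathbf{0}\}$, and then observes that if $(w_1,\dots,w_{k+1})\in\ker(J_{k+1,i}^{\tau})$ then $(w_2,\dots,w_{k+1})\in\ker((J_{k,i}^{(1)})^{\tau})$, so the hypothesis $\ker(J_{k,i}^{\tau})\subseteq\{v_k=\mathbf{0}\}$ (transferred to $J_{k,i}^{(1)}$, which has the same rank pattern) forces $w_{k+1}=\mathbf{0}$. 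In contrast, you bypass all matrix manipulation by invoking Proposition~\ref{di-prop1}(2) at two consecutive values of $i$: from $\mu_k=\mu_{k+1}$ you deduce equality of the contracted primes $\Delta_{i-e+2+k}\cap B_{i+1}=\Delta_{i-e+3+k}\cap B_{i+1}$ (via ``equal transcendence degree plus inclusion of primes in a finitely generated $K$-algebra $\Rightarrow$ equality''), then contract to $B_i$ and read off $\mu_{k+1}=\mu_{k+2}$ from Proposition~\ref{di-prop1}(2) again.

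The paper's proof is more self-contained---it uses only the definition of $\mu_k$ and elementary block-matrix reasoning---whereas yours leans on Proposition~\ref{di-prop1} (whose proof is deferred to \cite{iqr}) and on the commutative-algebra fact about strict inclusions of primes dropping transcendence degree. On the other hand, your approach is more conceptual: it identifies $\mu_k$ with a geometric invariant and exploits the freedom in the index $i$, which is exactly the mechanism later used in Theorem~\ref{mc-tm}. Your index bookkeeping and the contraction step $(\Delta_m\cap B_{i+1})\cap B_i=\Delta_m\cap B_i$ are correct, and the ``delicate point'' you flag is handled by pulling the primes back to $A_{i+1}$ (localization gives an order-preserving bijection onto primes contained in $\fp\cap A_{i+1}$) and using that a strict inclusion of primes in a polynomial ring over $K$ strictly decreases $\trdeg_K$ of the residue field.
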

\begin{proof}
The result is clear for $k_0=0$: in this case, $\mu_{1}=0$, which is equivalent to the fact that the matrix $J_{1,i}$ has full row rank. We conclude that $J_{k,i}$ has full row rank too or, equivalently, that $\mu_{k}=0$ for all $k$.

Now, let us assume that $k_0\ge1$. It suffices to show that the equality $\mu_{k}=\mu_{k-1}$ for an arbitrary index $k\ge2$, implies $\mu_{k+1}=\mu_{k}$. In the sequel, for a vector $v\in\kappa^{lr}$ we will write its description as a block vector $v=(v_1,\ldots,v_l)$ with $v_j\in\kappa^r$. Due to the recursive relation (\ref{di-equ2}), the identity $\ker(J^t_{k,i})\times\{0\}^r=\ker(J^t_{k+1,i})\cap\{v_{k+1}=\mathbf{0}\}$ holds in $\kappa^{(k+1)r}$ for every $k\in\N$ and so, the equality $\mu_{k}=\mu_{k+1}$ is equivalent to the inclusion $\ker(J^t_{k+1,i})\subseteq\{v_{k+1}=\mathbf{0}\}$. Then, the theorem is a consequence of the following recursive principle:

Claim: For all $k\in\N$, $\ker(J^{\tau}_{k,i})\subseteq\{v_k=\mathbf{0}\}$ implies $\ker(J^{\tau}_{k+1,i})\subseteq\{v_{k+1}=\mathbf{0}\}$.\\
{\em Proof of the claim.} Suppose $w=(w_1,\ldots,w_{k+1})^{\tau}$ is a solution of $J_{k+1,i}^{\tau}$, then by (\ref{di-equ3}), we have $(w_2,\ldots,w_{k+1})\cdot J_{k,i}^{(1)}=0$. Since $\ker(J^{\tau}_{k,i})\subseteq\{v_k=\mathbf{0}\}$, $J^{\tau}_{k,i}$ can be transformed to a upper triangular matrix with the last $k$ rows is an identity matrix through the elementary row transformations, and so can $(J_{k,i}^{(1)})^{\tau}$. It follows $w_{k+1}=\mathbf{0}$ which proves the claim.
\end{proof}

\begin{definition}
By Theorem \ref{di-thm1}, there exists $\omega\in\N$ such that $\mu_k<\mu_{k+1}$ for all $k<\omega$ and $\mu_k=\mu_{k+1}$ for all $k\ge\omega$. Such $\omega$ is called the {\em $\fp$-difference index} of the system $F$. If $[F]$ is itself a $\D$-prime ideal, we say simply the difference index of $F$.
\end{definition}

It is obvious from the construction that $\omega$ is depending on the choice of the minimal $\D$-prime ideal $\fp$ over $[F]$. However, we will prove some properties of $\omega$ which meet our expectation for difference indices.

\section{Properties of $\fp$-difference index}
\subsection{Manifold of constraints}
A remarkable property associated with most differentiation indices is that they provide an upper bound for the number of derivatives of the system needed to obtain all the constraints that must be satisfied by the solutions of the system. This case is also suitable for the $\fp$-difference indices defined above.
\begin{theorem}\label{mc-tm}
Let $\omega\in\N$ be the $\fp$-difference index of the system $F$. Then, for every $i\in\N_{\ge e-1}$, the equality of ideals
\[\Delta_{i-e+1+\omega}\cap B_i=\Delta\cap B_i\]
holds in the ring $B_i$. Furthermore, for every $i\in\N_{\ge e-1}$, the $\fp$-difference index $\omega$ verifies: $\omega=\min\{h \in\N:\Delta_{i-e+1+h}\cap B_i=\Delta\cap B_i\}$.
\end{theorem}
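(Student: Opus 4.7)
My strategy is to compare prime ideals of $B_i$ via the transcendence degrees of their residue fields over $K$. Since $A_i = K[\Y^{[i]}]$ is a finitely generated $K$-algebra domain, one has $\dim(A_i/P) = \trdeg_K \Frac(A_i/P)$ for any prime $P$; hence two primes $P \subseteq Q$ of $A_i$ (or equivalently of $B_i$, as localization preserves residue fields) coincide as soon as their residue-field transcendence degrees agree. By Proposition~\ref{pd-prop}(1), each $\Delta_{i-e+1+k}$ is prime in $B_{i+k}$ (with $\Delta_0 = (0)$ trivially prime in the edge case $k=0$, $i = e-1$), so each $\Delta_{i-e+1+k} \cap B_i$ is a prime of $B_i$, and these form a non-decreasing chain in $k$. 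Proposition~\ref{di-prop1}(2) gives
\[\trdeg_K \Frac(B_i/(\Delta_{i-e+1+k} \cap B_i)) = (n-r)(i+1) + er - \mu_k.\]
For $k \ge \omega$ we have $\mu_k = \mu_\omega$, so consecutive primes in the chain share the same residue-field transcendence degree and therefore coincide; iterating gives $\Delta_{i-e+1+k} \cap B_i = \Delta_{i-e+1+\omega} \cap B_i$ for every $k \ge \omega$.

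Next, I identify this stable value with $\Delta \cap B_i$. One inclusion is immediate. For the reverse, Proposition~\ref{pd-prop}(2) gives $\Delta = \fp K\{\Y\}_\fp$, so $\Delta \cap B_i$ is the maximal ideal $(\fp \cap A_i) B_i$ of $B_i$; it therefore suffices to place every $p \in \fp \cap A_i$ into some $\Delta_m \cap B_i$. Writing $p = \sum_j H_j\, \sigma^{l_j}(f_{a_j})$ in $K\{\Y\}_\fp$ with $H_j = h_j/t_j$ and clearing denominators, injectivity of $K\{\Y\} \hookrightarrow K\{\Y\}_\fp$ yields an identity $p\, t = \sum_j h'_j\, \sigma^{l_j}(f_{a_j})$ in $K\{\Y\}$ with $t := \prod_j t_j \notin \fp$ and $h'_j \in K\{\Y\}$. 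Choosing $m$ large enough that all $h'_j, t$ lie in $A_{m-1+e}$ and $l_j \le m-1$, the right-hand side sits in the ideal $\Delta_m \subseteq A_{m-1+e}$; since $t$ is a unit in $B_{m-1+e}$, this gives $p \in \Delta_m$ in $B_{m-1+e}$, hence $p \in \Delta_m \cap B_i$. Therefore $\Delta \cap B_i = \bigcup_m (\Delta_m \cap B_i) = \Delta_{i-e+1+\omega} \cap B_i$ by the previous paragraph.

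For the minimality assertion, fix $h < \omega$; by the definition of $\omega$ we have $\mu_h < \mu_\omega$, so Proposition~\ref{di-prop1}(2) forces the residue field of $\Delta_{i-e+1+h} \cap B_i$ to have strictly greater transcendence degree than that of $\Delta \cap B_i = \Delta_{i-e+1+\omega} \cap B_i$, whence $\Delta_{i-e+1+h} \cap B_i \subsetneq \Delta \cap B_i$. Combined with the equality for $h \ge \omega$ established above, this pins $\omega$ down as the claimed minimum. The subtlest step I anticipate is the descent in the middle paragraph from the global $\sigma$-ideal $\Delta \subseteq K\{\Y\}_\fp$ to the finite-level local ideals $\Delta_m \subseteq B_{m-1+e}$; once that bookkeeping is done, everything else reduces to Proposition~\ref{di-prop1} and standard dimension theory for finitely generated domains over $K$.
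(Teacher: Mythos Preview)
Your proof is correct and follows essentially the same route as the paper: establish that the chain of primes $(\Delta_{i-e+1+k}\cap B_i)_k$ stabilizes for $k\ge\omega$ via the transcendence-degree formula of Proposition~\ref{di-prop1}, then show the stable value is $\Delta\cap B_i$ by clearing denominators in a finite representation and invoking stationarity. Your minimality argument (directly using $\mu_h<\mu_\omega$ for $h<\omega$ to force strict containment) and your detour through $\Delta\cap B_i=(\fp\cap A_i)B_i$ via Proposition~\ref{pd-prop}(2) are minor cosmetic variations on the paper's presentation, not a different approach.
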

\begin{proof}
Fix an index $i\in\N_{\ge e-1}$. Let us consider the increasing chain $(\Delta_{i-e+1+k}\cap B_i)_{k\in\N}$ of prime ideals in the ring $B_i$. From Proposition \ref{di-prop1}, for every $k\in\N$, we have
\begin{equation}\label{pdi-eq}
\trdeg_K(\Frac(B_{i}/(\Delta_{i-e+1+k}\cap B_{i})))=(n-r)(i+1)+er-\mu_{k}.
\end{equation}
Since $\mu_k$ is stationary for $k\ge\omega$ (Theorem \ref{di-thm1}), all the prime ideals $\Delta_{i-e+1+k}\cap B_{i}$ have the same dimension for $k\ge\omega$ and the chain of prime ideals becomes stationary for $k\ge\omega$.

It only remains to prove that the largest ideal of the chain coincides with $\Delta\cap B_i$. One inclusion is obvious. For the other, let $f$ be an arbitrary element of $\Delta\cap B_i$, then there exist difference polynomials $h,a_{lj}\in K\{\Y\}, h\notin\fp$ such that
\[f=\sum^r_{l=1}\sum_{j}\frac{a_{lj}f^{(j)}_l}{h}.\]
Let $N$ be the maximal order of the variables $\Y$ appearing in this equality. Then we have $f\in \Delta_{N-e+1}\subseteq B_N$ and hence $f\in\Delta_{N-e+1}\cap B_i$. Since the above chain of ideals is stationary for $k\ge\omega$, $f\in\Delta_{i-e+1+\omega}\cap B_i$. This completes the proof of the first assertion of the Theorem.

In order to prove the second part of the statement, for each $i\in\N_{\ge e-1}$, let $h_i$ be the smallest non-negative integer such that $\Delta_{i-e+1+h_i}\cap B_i=\Delta\cap B_i$. By the choice of $h_i$, the transcendence degrees $\trdeg_K(\Frac(B_{i}/(\Delta_{i-e+1+k}\cap B_{i})))$ coincide for $k\ge h_i$, and so by (\ref{pdi-eq}), $\mu_k$ is constant for $k\ge h_i$. This implies that $\omega\le h_i$. The equality follows from the first part of the statement and the minimality of $h_i$.
\end{proof}
\begin{remark}
Taking $i=e-1$ in the last assertion of Theorem \ref{mc-tm}, we have the following alternative definition of the $\fp$-difference index:
\[\omega=\min\{h\in\N:\Delta_h\cap B_{e-1}=\Delta\cap B_{e-1}\}.\]
\end{remark}

\subsection{The order of $\fp$}
The following proposition reveals a connection between $\mu_k$ and the order of $\fp$.
\begin{prop}\label{ord}
Assume that $F$ is a difference algebraic system which is quasi-regular at $\fp$ and $\omega$ is the $\fp$-difference index of $F$. Then $\ord(\fp)=er-\mu_{\omega}.$
\end{prop}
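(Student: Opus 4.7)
The plan is to identify two different expressions for the transcendence degree $\trdeg_K\Frac(B_i/(\Delta\cap B_i))$ when $i$ is large enough, and then match them. One expression will come from the dimension polynomial of $\fp$ (which defines the order), and the other will come from the results already proved in Sections 4 and 5.

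First I would unpack the identification $B_i/(\Delta\cap B_i) \cong (A_i/(\fp\cap A_i))_{(0)}$. By Proposition \ref{pd-prop}(2), $\Delta = \fp K\{\Y\}_{\fp}$, so $\Delta\cap B_i$ is the extension to $B_i = (A_i)_{\fp\cap A_i}$ of the prime ideal $\fp\cap A_i$. Consequently $\Frac(B_i/(\Delta\cap B_i)) = \Frac(A_i/(\fp\cap A_i))$, and by the very definition of the order of a $\D$-prime ideal via the difference dimension polynomial $\omega_{\fp}(t)$, for all sufficiently large $i$ one has
\[\trdeg_K\Frac(B_i/(\Delta\cap B_i)) = \trdeg_K\Frac(A_i/(\fp\cap A_i)) = (n-r)(i+1) + \ord(\fp).\]

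Next I would apply Theorem \ref{mc-tm}: for every $i\in\N_{\ge e-1}$ we have the ideal equality $\Delta_{i-e+1+\omega}\cap B_i = \Delta\cap B_i$. Combined with Proposition \ref{di-prop1}(2) applied with $k=\omega$, this gives
\[\trdeg_K\Frac(B_i/(\Delta\cap B_i)) = \trdeg_K\Frac(B_i/(\Delta_{i-e+1+\omega}\cap B_i)) = (n-r)(i+1) + er - \mu_{\omega}\]
for every $i\ge e-1$.

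Finally, for any $i\ge e-1$ that is also large enough for the dimension polynomial formula to hold, the two right-hand sides must agree, and the term $(n-r)(i+1)$ cancels, yielding $\ord(\fp) = er - \mu_{\omega}$. The only delicate point, which would be the main thing to verify explicitly, is the clean identification $\Frac(B_i/(\Delta\cap B_i)) = \Frac(A_i/(\fp\cap A_i))$ and the corresponding interpretation of $\ord(\fp)$ as the constant term of the dimension polynomial written in the form $(n-r)(t+1) + \ord(\fp)$; once this bookkeeping is settled, everything else is an immediate consequence of Theorem \ref{mc-tm} and Proposition \ref{di-prop1}(2).
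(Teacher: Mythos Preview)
Your proposal is correct and follows essentially the same route as the paper: identify $\Frac(B_i/(\Delta\cap B_i))=\Frac(A_i/(\fp\cap A_i))$, compute its transcendence degree in two ways---once via the dimension polynomial of $\fp$ and once via Theorem \ref{mc-tm} together with Proposition \ref{di-prop1}(2)---and then equate constant terms. If anything, your version is slightly more careful than the paper's in flagging that the dimension-polynomial formula is a priori only valid for sufficiently large $i$, which is all that is needed here.
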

\begin{proof}
Fix an index $i\in\N_{\ge e-1}$. By Theorem \ref{mc-tm}, for $k\ge\omega$, $\Delta_{i-e+1+k}\cap B_i=\Delta\cap B_i$. Therefore, for $k\ge\omega$, by Proposition \ref{di-prop1} and Theorem \ref{di-thm1},
\begin{align*}
\trdeg_K(\Frac(B_{i}/(\Delta\cap B_{i})))&=\trdeg_K(\Frac(B_{i}/(\Delta_{i-e+1+k}\cap B_{i})))\\
&=(n-r)(i+1)+er-\mu_{k}.
\end{align*}
On the other hand, since $\Frac(B_{i}/(\Delta\cap B_{i}))=\Frac(A_{i}/(\fp\cap A_{i}))$, by the dimension polynomial of $\fp$ (see for instance \cite[Chapter 5]{wibmer}),
\begin{align*}
\trdeg_K(\Frac(B_{i}/(\Delta\cap B_{i})))&=\D\textrm{-}\dim(\fp)(i+1)+\ord(\fp)\\
&=(n-r)(i+1)+\ord(\fp),
\end{align*}
where $\D\textrm{-}\dim(\fp)=n-r$ by Proposition \ref{pd-prop}. It follows $\ord(\fp)=er-\mu_{\omega}.$
\end{proof}

\subsection{Jacobi-type bounds}
Jacobi introduced a parameter associated with the orders of derivations in a differential algebraic system and conjectured an upper bound for the order of the system in terms of this number. Cohn generalized this to a difference algebraic system.

For a difference algebraic system $F$, we introduce an auxiliary integer matrix $\mathcal{E}_0:=(\epsilon_{ij})_{r\times n}$ whose entries are the orders $\epsilon_{ij}$ of $f_i$ with respect to the variable $y_j$ appearing in $f_i$ and $0$ if the variable $y_j$ does not appear in $f_i$.
\begin{definition}
Let $A\in \N^{r\times n}$, $r\le n$, be an integer matrix. The Jacobi number of $A$ is defined to be
$$J(A):=\max\{\sum_{i=1}^ra_{i\tau(i)}\mid \tau\colon\{1,\ldots,r\}\rightarrow\{1,\ldots,n\}\textrm{ is an injection }\}.$$
\end{definition}

We have the following the Jacobi-type bound for the sum of the $\fp$-difference index and the order of $\fp$. Since the proof is very similar to \cite[Theorem 15]{iqr}, we will omit it.
\begin{theorem}\label{pdi-thm}
Let $F$ be a quasi-regular system at $\fp$. Then, the $\fp$-difference index $\omega$ of the system $F$ and the order $\ord(\fp)$ of $\fp$ satisfy
$$\omega+\ord(\fp)\le J(\mathcal{E}_0)+e-\min\{\epsilon_{ij}\}.$$
\end{theorem}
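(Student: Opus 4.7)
The plan is to adapt the proof of Theorem~15 in \cite{iqr} from the differential to the difference setting. First, applying Proposition~\ref{ord} to substitute $\ord(\fp) = er - \mu_\omega$, the target inequality becomes
$$\mu_\omega \ge \omega + er - J(\mathcal{E}_0) - e + \min\{\epsilon_{ij}\}.$$
Since $(\mu_k)_{k\le\omega}$ is strictly increasing from $\mu_0=0$, one already has the crude estimate $\mu_\omega \ge \omega$; the task is to combine this growth principle with a Jacobi-style lower bound on the kernel of $J_{\omega,i}^{\tau}$.

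Fix $i \in \N_{\ge e-1}$ large and choose an injection $\pi : \{1,\ldots,r\} \to \{1,\ldots,n\}$ attaining $J(\mathcal{E}_0) = \sum_{l} \epsilon_{l,\pi(l)}$. The entry of $J_{\omega,i}$ at position (row $(l,p)$, column $(j,q)$) is $\partial f_l^{(p)}/\partial y_j^{(q)}$, which vanishes outside the band $p \le q \le p + \epsilon_{l,j}$. Along the transversal diagonal $q = p + \epsilon_{l,\pi(l)}$, this entry is the $p$-th shift of the leading partial derivative of $f_l$ with respect to $y_{\pi(l)}^{(\epsilon_{l,\pi(l)})}$, which is non-zero modulo $\fp$ by the quasi-regularity and constant-rank hypotheses. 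Using $\pi$, I would extract a square submatrix of $J_{\omega,i}$ indexed by the pairable rows and their matched columns, and show its determinant does not vanish modulo $\fp$. The rows that cannot be paired --- namely those indices $p$ for which no valid column $(\pi(l), p + \epsilon_{l,\pi(l)})$ lies in the column window $\{i+1,\ldots,i+\omega\}$ --- are automatically forced into $\ker(J_{\omega,i}^{\tau})$, yielding the Cohn-type contribution $er - J(\mathcal{E}_0)$ to $\mu_\omega$.

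To obtain the sharper $\omega - e + \min\{\epsilon_{ij}\}$ correction, I would exploit the strict increase $\mu_{k+1} > \mu_k$ for $k < \omega$ from Theorem~\ref{di-thm1}. During the initial phase $k \le e - \min\{\epsilon_{ij}\}$, each newly added block of rows in $J_{k+1,i}$ must contribute at least one extra kernel dimension beyond those already accounted for by the unpaired rows above, because the band-width constraint coming from $\min\{\epsilon_{ij}\}$ leaves too few admissible columns in the newly introduced block. Summing these forced contributions along the chain $\mu_1 < \mu_2 < \cdots < \mu_\omega$, following \cite[Theorem~15]{iqr}, produces the required gap $\mu_\omega - \omega \ge er - J(\mathcal{E}_0) - e + \min\{\epsilon_{ij}\}$.

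The main obstacle is the combinatorial bookkeeping in the previous paragraph: precisely quantifying the excess kernel dimensions produced at each strict jump $\mu_{k} > \mu_{k-1}$ and verifying that the pairable submatrix constructed from $\pi$ has a non-vanishing determinant modulo $\fp$. Both tasks parallel the delicate linear-algebraic counts in \cite[Theorem~15]{iqr}; the translation to the difference setting is essentially mechanical because, as observed after Proposition~\ref{di-prop1}, the partial-derivative and shift operators commute, preserving the band structure of the Jacobian exactly as in the differential case.
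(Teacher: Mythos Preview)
Your proposal is correct and follows precisely the paper's own approach: the paper omits the proof entirely, stating only that it is ``very similar to \cite[Theorem~15]{iqr}'', and your plan is exactly to adapt that argument to the difference setting via Proposition~\ref{ord} and the block-band structure of $J_{k,i}$. One minor caution: your assertion that the leading partial derivative $\partial f_l/\partial y_{\pi(l)}^{(\epsilon_{l,\pi(l)})}$ is nonzero modulo $\fp$ does not follow directly from quasi-regularity or the constant-rank hypothesis, so when you carry out the details you should track how \cite[Theorem~15]{iqr} actually handles the rank count rather than relying on that particular minor.
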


\section{Applications of $\fp$-difference index}
\subsection{The Hilbert-Levin regularity}
For a $\D$-prime ideal $\fp$, the polynomial $\varphi(i)=\D\textrm{-}\dim(\fp)(i+1)+\ord(\fp)$ is known as the dimension polynomial of $\fp$ (see for instance \cite[Chapter 5]{wibmer}). The minimum of the indices $i_0$ such that $\varphi(i)=\trdeg_K(\Frac(A_i/(A_i\cap\fp)))$ for all $i\ge i_0$ is called the {\em Hilbert-Levin regularity} of $\fp$. The results developed on $\fp$-difference indices enable us to give an upper bound of the Hilbert-Levin regularity of $\fp$.
\begin{theorem}
The Hilbert-Levin regularity of the $\D$-prime ideal $\fp$ is bounded by $e-1$.
\end{theorem}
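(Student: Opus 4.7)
The plan is to chain together three ingredients already established in the excerpt: the transcendence-degree formula of Proposition \ref{di-prop1}(2), the stabilization theorem for $\Delta_{i-e+1+\omega}\cap B_i=\Delta\cap B_i$ (Theorem \ref{mc-tm}), and the identity $\ord(\fp)=er-\mu_\omega$ (Proposition \ref{ord}). Together these show directly that the transcendence-degree function coincides with the dimension polynomial $\varphi(i)=(n-r)(i+1)+\ord(\fp)$ already from $i=e-1$ onward.

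Concretely, I would fix any $i\in\N_{\ge e-1}$ and apply Theorem \ref{mc-tm} with $k=\omega$ to replace $\Delta\cap B_i$ by $\Delta_{i-e+1+\omega}\cap B_i$. Then Proposition \ref{di-prop1}(2) gives
\[\trdeg_K\bigl(\Frac(B_i/(\Delta\cap B_i))\bigr)=(n-r)(i+1)+er-\mu_\omega.\]
By Proposition \ref{ord}, $er-\mu_\omega=\ord(\fp)$, so the right-hand side is exactly $\varphi(i)$. Finally, since localization at $A_i\cap\fp$ preserves residue fields, $\Frac(B_i/(\Delta\cap B_i))=\Frac(A_i/(\fp\cap A_i))$, and therefore
\[\trdeg_K\bigl(\Frac(A_i/(\fp\cap A_i))\bigr)=\varphi(i)\quad\text{for all }i\ge e-1,\]
which is precisely the statement that the Hilbert--Levin regularity of $\fp$ is at most $e-1$.

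There is no real obstacle; everything boils down to verifying that the hypothesis $i\in\N_{\ge e-1}$ in Theorem \ref{mc-tm} and Proposition \ref{di-prop1} is the same range that appears in the definition of the Hilbert--Levin regularity, and to noting the identification of the two fraction fields under localization. The short proof is essentially a bookkeeping assembly of the previously proved identities.
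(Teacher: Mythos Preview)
Your proposal is correct and uses the same ingredients as the paper's proof: the identification $\Frac(A_i/(A_i\cap\fp))=\Frac(B_i/(B_i\cap\Delta))$, Theorem~\ref{mc-tm}, and Proposition~\ref{di-prop1}(2). The only presentational difference is that you invoke Proposition~\ref{ord} to identify $er-\mu_\omega$ with $\ord(\fp)$ directly, whereas the paper instead verifies the increment $\trdeg_K(\Frac(B_{i+1}/(\Delta\cap B_{i+1})))-\trdeg_K(\Frac(B_{i}/(\Delta\cap B_{i})))=n-r$ and appeals to the eventual agreement with $\varphi$; these are equivalent and equally short.
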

\begin{proof}
Since for all $i\in\N$, we have $\Frac(A_i/(A_i\cap\fp))=\Frac(B_i/(B_i\cap\Delta))$. Therefore, $\trdeg_K(\Frac(A_i/(A_i\cap\fp)))=\trdeg_K (\Frac(B_i/(B_i\cap\Delta)))$ and so, it is enough to show that, for all $i\ge e-1$, $\trdeg_K(\Frac(B_i/(B_i\cap\Delta)))+\D\textrm{-}\dim(\fp)=\trdeg_K(\Frac(B_{i+1}/(B_{i+1}\cap\Delta)))$.

Fix an index $i\ge e-1$. By Theorem \ref{mc-tm}, we have that $\Delta\cap B_i=\Delta_{i-e+1+\omega}\cap B_i$
and $\Delta\cap B_{i+1}=\Delta_{i-e+2+\omega}\cap B_{i+1}$. Thus, by Proposition \ref{di-prop1}, we obtain:
\begin{align*}
\trdeg_K(\Frac(B_{i+1}/(\Delta\cap B_{i+1})))&=(n-r)(i+2)+er-\mu_{\omega},\\
\trdeg_K(\Frac(B_{i}/(\Delta\cap B_{i})))&=(n-r)(i+1)+er-\mu_{\omega},
\end{align*}
Hence, the result holds.
\end{proof}

\subsection{The ideal membership problem}
It is well-known that in polynomial algebra, the ideal membership problem is to decide if a given element $f\in A$ belongs to a fixed ideal $I\subseteq A$ for an arbitrary commutative ring $A$, and, in the affirmative case, representing $f$ as a linear combination with polynomial coefficients of a given set of generators of $I$.

The ideal membership problem also exists in differential algebra and difference algebra. But unlike the case in polynomial algebra, this problem is undecidable for arbitrary ideals in differential algebra (see \cite{ga-crd}) and difference algebra. However, there are special classes of differential ideals for which the problem is decidable, in particular the class of radical differential ideals (\cite{etda}, see also \cite{rdi}).

When it comes to the representation problem, the differential case or the difference case involves another additional ingredient: the order $N$ of derivation or transform of the given generators of $I$ needed to write an element $f\in I$ as a polynomial linear combination of the generators and their first $N$ total derivatives or total transforms. The known order bounds seem to be too big, even for radical ideals (see for instance \cite{gp-bod}, where an upper bound in terms of the Ackerman function is given, or \cite{gu}, a better and more explicit upper bound). In \cite{iqr}, an order bound for quasi-regular differential algebraic systems is given, due to the properties of differential indices defined in the same paper. However, it seems that there does not exist any results on the corresponding bound in the difference case. By virtue of Theorem \ref{mc-tm}, we are able to give an order bound for the membership problem of a quasi-regular difference system.

The following ideal membership theorem for polynomial rings will be used.
\begin{theorem}(\cite{di}, Theorem 5.1)\label{adi-thm}
Let $K$ be a field and $g_1,\ldots,g_s\in K[y_1,\ldots,y_n]$ be a complete intersection of polynomials whose total degrees are bounded by an integer $d$. Let $g\in K[y_1,\ldots,y_n]$ be another polynomial. Then the following conditions are equivalent:\\
1. $g$ belongs to the ideal generated by $g_1,\ldots,g_s$;\\
2. there exist polynomials $a_1,\ldots,a_s$ such that $g=\sum_{j=1}^sa_jg_j$ and $\deg(a_jg_j)\le d^s+\deg(g)$ for $1\le j\le s$.
\end{theorem}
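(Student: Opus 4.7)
The direction $(2) \Rightarrow (1)$ is immediate; the content lies in $(1) \Rightarrow (2)$, which requires exhibiting coefficients $a_j$ with a controlled degree bound. The natural strategy is to exploit the exactness of the Koszul complex afforded by the complete intersection hypothesis, combined with a Bezout-style estimate on the degree of the associated variety.

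First I would homogenize the problem: introduce an auxiliary variable $y_0$ and form $G_j(y_0,\ldots,y_n) := y_0^{\deg(g_j)}\, g_j(y_1/y_0,\ldots,y_n/y_0)$, together with the analogous $G$. After a generic linear change of coordinates if needed, the complete intersection property is preserved at infinity, so $(G_1,\ldots,G_s)$ is a regular sequence of homogeneous polynomials of degrees $d_j \le d$ in $R := K[y_0,\ldots,y_n]$. The Koszul complex $K_\bullet(G_1,\ldots,G_s)$ is then exact and yields a minimal graded free resolution of $R/(G_1,\ldots,G_s)$; in particular the first syzygies are generated by the Koszul relations $G_j e_i - G_i e_j$.

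Given $g \in I$, lift $G$ through the Koszul complex to produce homogeneous $A_j$ with $G = \sum_j A_j G_j$ and a quantitative degree bound controlled by the Hilbert series $\prod_j(1-t^{d_j})/(1-t)^{n+1}$ of the quotient and by the Bezout degree $\prod_j d_j \le d^s$ of the projective variety $V(G_1,\ldots,G_s)$. Dehomogenizing by setting $y_0 = 1$ then produces $a_j \in K[y_1,\ldots,y_n]$ satisfying $g = \sum_j a_j g_j$ and inheriting the degree estimate, giving the required bound $\deg(a_j g_j) \le d^s + \deg(g)$.

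The main obstacle is twofold: first, ensuring that the complete intersection property is preserved when passing to the projective closure so that the Koszul complex is genuinely exact in the graded setting; and second, pinning down the precise $d^s + \deg(g)$ estimate rather than the tighter but more delicate Macaulay-type bound one reads off directly from the Hilbert series. The former is handled by a generic linear change of variables, while the latter requires combining the graded degree estimate coming from the Koszul resolution with the Bezout bound on the degree of the projective complete intersection, together with care that dehomogenization neither destroys the bound nor leaves spurious factors of $y_0$. Once this degree bookkeeping is carried out cleanly, the representation with the stated bound falls out from exactness of the Koszul complex applied to $G$.
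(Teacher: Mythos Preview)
The paper gives no proof of this statement: it is quoted verbatim as Theorem~5.1 of the cited reference \cite{di} (Dickenstein--Fitchas--Giusti--Sessa) and used as a black box in the subsequent effective membership theorem. There is therefore nothing in the present paper to compare your argument against.

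As a separate remark on the sketch itself: the Koszul-complex strategy is indeed what underlies the result in \cite{di}, so the overall shape is right. However, your reduction step is not sound as stated. A linear change of the affine variables $y_1,\ldots,y_n$ does not alter the hyperplane at infinity $\{y_0=0\}$, so it cannot force the homogenizations $G_1,\ldots,G_s$ to remain a regular sequence there; whether the leading forms $g_j^{\mathrm{top}}$ cut out something of the expected codimension is a genuine extra condition that an affine linear substitution cannot arrange. The argument in \cite{di} avoids this by working directly with the (already exact) Koszul complex of the affine regular sequence and controlling degrees via an induction on $s$ together with an effective division step, rather than by passing to a projective complete intersection. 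If you want to salvage the homogeneous route, you would need a different device (e.g.\ an auxiliary deformation or an added generic linear form in $y_0,\ldots,y_n$) to guarantee exactness of the graded Koszul complex, and then to check that dehomogenization still yields the bound $d^s+\deg(g)$ rather than a weaker one.
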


We have the following effective ideal membership theorem for quasi-regular difference algebraic systems:
\begin{theorem}
Suppose $F$ be a quasi-regular difference algebraic system in the sense of Remark \ref{pd-re}. Let $D$ be an upper bound for the total degrees of $f_1,\ldots,f_r$. Let $f\in K\{\Y\}$ be an arbitrary difference polynomial in the difference ideal $[F]$. Set $N:=\omega+\max\{-1,\ord(f)-e\}$, where $\omega$ is the difference index of $F$. Then, a representation
\[f=\sum_{1\le i\le r,0\le j\le N}g_{ij}f^{(j)}_i\]
holds in the ring $A_{N+e}$, where each polynomial $g_{ij}f_i^{(j)}$ has total degree bounded by $\deg(f)+D^{r(N+1)}$.
\end{theorem}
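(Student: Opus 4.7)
The plan is to reduce the difference ideal membership problem to a standard polynomial ideal membership problem in the polynomial ring $A_{N+e}$, and then invoke Theorem \ref{adi-thm} with the appropriate complete intersection and degree bound.

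First, I would show that $f\in\Delta_{N+1}$; that is, $f$ admits a representation involving only transforms of $F$ up to order $N$. Set $m:=\max\{e-1,\ord(f)\}$, so that $f\in A_m$ and $m\ge e-1$. A short case analysis verifies that $m-e+1+\omega=N+1$: when $\ord(f)\le e-1$ one has $m=e-1$ and $N=\omega-1$, while when $\ord(f)\ge e$ one has $m=\ord(f)$ and $N=\omega+\ord(f)-e$. In both situations, Theorem \ref{mc-tm} applied at index $m$ (in the Remark \ref{pd-re} setting, where $B_m=A_m$) yields
\[f\in \Delta\cap A_m=\Delta_{N+1}\cap A_m,\]
placing $f$ in the ideal of $A_{N+e}$ generated by the $r(N+1)$ polynomials $\{f_l^{(j)}:1\le l\le r,\,0\le j\le N\}$.

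Next, I would check that these generators satisfy the hypotheses of Theorem \ref{adi-thm}. Each transform $f_l^{(j)}$ has the same total degree as $f_l$ because $\D$ merely relabels the variables $y_u^{(a)}\mapsto y_u^{(a+1)}$; hence every generator has total degree at most $D$. For the complete intersection property, Proposition \ref{pd-prop}(1), applied in the Remark \ref{pd-re} setting (no localization), asserts that $f_1^{[N]},\ldots,f_r^{[N]}$ form a regular sequence in the polynomial ring $A_{N+e}$, and a regular sequence in a Cohen--Macaulay ring is a complete intersection of the expected codimension $r(N+1)$.

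Finally, I would apply Theorem \ref{adi-thm} with ambient ring $A_{N+e}$, generators $g_1,\ldots,g_s$ being the $s:=r(N+1)$ polynomials $f_l^{(j)}$, degree bound $d:=D$, and target polynomial $g:=f$. Since $f$ lies in the ideal these generators cut out, the theorem produces coefficients $g_{lj}\in A_{N+e}$ with $f=\sum_{l,j}g_{lj}f_l^{(j)}$ and each summand $g_{lj}f_l^{(j)}$ of total degree at most $D^{r(N+1)}+\deg(f)$, which is precisely the stated bound. The main obstacle, in my view, is not any single computation but ensuring that the Remark \ref{pd-re} form of quasi-regularity genuinely delivers a complete intersection in $A_{N+e}$ itself rather than only in the localization $B_{N+e}$; this is what allows Theorem \ref{adi-thm} to be invoked directly and distinguishes the present situation from the general quasi-regular-at-$\fp$ case.
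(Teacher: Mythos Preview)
Your proposal is correct and follows essentially the same route as the paper's proof: apply Theorem \ref{mc-tm} at $i=\max\{e-1,\ord(f)\}$ to land $f$ in $\Delta_{N+1}$, use Proposition \ref{pd-prop}(1) (in the unlocalized form granted by Remark \ref{pd-re}) to ensure the $r(N+1)$ generators form a regular sequence in $A_{N+e}$, and then invoke Theorem \ref{adi-thm} with $s=r(N+1)$ and $d=D$. Your explicit case analysis verifying $m-e+1+\omega=N+1$ and your remark on the localization issue simply unpack details the paper leaves implicit.
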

\begin{proof}
The upper bound on the order of transforms of the polynomials $f_1,\ldots,f_r$ is a direct consequence of Theorem \ref{mc-tm} applied to $i:=\max\{e-1, \ord(f)\}$. The degree upper bound for the polynomials $g_{ij}f_i^{(j)}$ follows from Proposition \ref{pd-prop} and Theorem \ref{adi-thm}.
\end{proof}
\begin{remark}
From Theorems \ref{mc-tm} and \ref{pdi-thm}, we have for every $i\in\N_{\ge e-1}$ , the equality
$\Delta_{i+1+J(\mathcal{E}_0)-\min\{\epsilon_{ij}\}}\cap A_i=\Delta\cap A_i$ holds. So it suffices to take $N:=J(\mathcal{E}_0)-\min\{\epsilon_{ij}\}+\max\{\ord(f),e-1\}$ to get more explicit upper bounds of the order and the degree in the above ideal membership problem.
\end{remark}

\section{An example}
\begin{example}
Notations follow as before. Consider the difference algebraic system $F=\{y_1^{(1)}-y_1y_3,y_2^{(1)}-y_2y_3,y_1+y_2-1\}\subseteq A=K\{y_1,y_2,y_3\}$. Then $\Delta=[F]$ is a $\D$-prime ideal and $F$ is a quasi-regular system in the sense of Remark \ref{pd-re}. The corresponding matrices $J_{k,0},k=1,2,3,\ldots$ are
\[\begin{pmatrix}
1&0&0&&&&&&&&&&&&\\0&1&0&&&&&&&&&&&&\\0&0&0&&&&&&&&&&&\\-1&0&-y_1&1&0&0&&&&&&&&\\0&-1&-y_2&0&1&0&&&&&&&&&\\1&1&0&0&0&0&&&&&&&&&
\\&&&-1&0&-y_1&1&0&0&&&&&&\\&&&0&-1&-y_2&0&1&0&&&&&&\\&&&1&1&0&0&0&0&&&&&&\\&&&&&&-1&0&-y_1&1&0&0&&&\\&&&&&&0&-1&-y_2&0&1&0&&&
\\&&&&&&1&1&0&0&0&0&&&\\&&&&&&&&&&&&&&\\&&&&&&&&&&\cdots&&&\cdots&\\&&&&&&&&&&&&&&
\end{pmatrix}.\]
Since $y_1^{(i)}=y_1,y_2^{(i)}=y_2,y_3^{(i)}=1$ in the ring $A/\Delta$ for all $i\in\N$, we have replaced $y_1^{(i)},y_2^{(i)},y_3^{(i)}$ by $y_1,y_2,1$ respectively in $J_{k,0}$ for all $i\in\N$. It can be computed that $\rank(J_{1,0})=2$, $\rank(J_{2,0})=4$, $\rank(J_{3,0})=7$, so $\mu_1=1,\mu_2=2,\mu_3=2$ and hence the difference index of the system $F$ is $\omega=2$. One can check that $\Delta_2\cap A_0=\Delta\cap A_0$.

The matrix $\mathcal{E}_0=\begin{pmatrix}1&0&0\\0&1&0\\0&0&0\end{pmatrix}$. Therefore, by Theorem \ref{pdi-thm}, $\omega+\ord(\Delta)\le J(\mathcal{E}_0)+e-\min\{\epsilon_{ij}\}=2+1-0=3$. On the other hand, by Proposition \ref{ord}, we have $\ord(\Delta)=er-\mu_{\omega}=3-2=1$. Therefore, $\omega+\ord(\Delta)=3$, which coincides with the above upper bound.
\end{example}

\end{document}